\newtheorem{theorem}{Theorem}[section]
\newtheorem{lemma}[theorem]{Lemma}
\newtheorem{corollary}[theorem]{Corollary}
\newtheorem{proposition}[theorem]{Proposition}
\theoremstyle{definition}%da qui in poi negli environment introdotti il testo non � in italico 
\newtheorem{remark}{Remark}
\newcommand{\R}{\mathbb{R}}
\newcommand{\N}{\mathbb{N}}
\def\beq{\begin{equation}}
\def\eeq{\end{equation}}
\title{\sc Some symmetry results for entire solutions of an elliptic system arising in
phase separation}
\date{}
\author{Alberto Farina }
\begin{document}
\numberwithin{equation}{section}
%%%%%%%%%%%%%%%%%%%%%%%%%%%%%%%%%%%%%%%%%%%%%%%%%%%%%%%%%%%%%%%%%%%%%%%%%%%%%%%
\maketitle
%%%%%%%%%%%%%%%%%%%%%%%%%%%%%%%%%%%%%%%%%%%%%%%%%%%%%%%%%%%%%%%%%%%%%%%%%%%%%%%
{\footnotesize
%\centerline {Alberto FARINA }
\centerline{LAMFA, CNRS UMR 7352, Universit\'e de Picardie Jules Verne}
\centerline{33 rue Saint-Leu, 80039 Amiens, France}
\centerline{and}
\centerline{Institut Camille Jordan, CNRS UMR 5208, Universit\'e Claude Bernard Lyon I}
\centerline{43 boulevard du 11 novembre 1918, 69622 Villeurbane cedex, France}
\centerline{email: alberto.farina@u-picardie.fr}}

\centerline{}

\centerline{}

\begin{abstract} 
We study the one dimensional symmetry of entire solutions to an elliptic system arising in phase separation  for Bose-Einstein condensates with multiple states. We prove that any monotone solution, with {\em arbitrary algebraic growth} at infinity, must be one dimensional in the case of two spatial variables.  We also prove the one dimensional symmetry for 
{\em half-monotone solutions}, i.e., for solutions having only one monotone component. 

\end{abstract}

%\noindent \textbf{Keywords:} {\footnotesize }

\medskip

\section{Introduction and main results}

We study smooth solutions of the elliptic system

\beq\label{Sistema}
\begin{cases}
\Delta u  = u v^2 & \text{in $\R^N$}\\
\Delta v  = v u^2 & \text{in $\R^N$}\\
u, v >0  & \text{in $\R^N$}
\end{cases}
\eeq

\noindent where the pair $(u,v)$ has at most algebraic growth at infinity and $N\ge2$. 

\bigskip

Problems and solutions of this type naturally arise in the study of phase separation phenomena for Bose-Einstein condensates with multiple states (cfr. \cite{BLWZ}, \cite{BTWW} and the references therein).

In order to motivate our study and to understand the difficulties that one has to face when dealing with system \eqref{Sistema}, we review the known results about the considered problem.
 %as well as some open questions
The one dimensional case was studied in \cite{BLWZ}. The Authors of  \cite{BLWZ} proved the existence, symmetry, monotonicity and the growth estimates for the solutions to \eqref{Sistema}. In particular they proved that 

\[
\exists \, x_0 \in \R  \quad  : \quad
u(x-x_0) = v(x_0-x) \qquad \forall \, x \in \R,
\]

\[
{\rm either } \quad u' >0, \, v'<0 \quad {\rm or }\quad u'<0, \, v'>0
\]
and
\[
u(x) +v(x) \le C (1 + \vert x \vert) \qquad \forall \, x \in \R.
\] 
Uniqueness (up to translations, scaling and reflection) of the solution  to the one dimensional system \eqref{Sistema} has been recently settled in \cite{BTWW}. Thus, the one dimensional case is well-understood. On the other hand, the higher dimensional case is more involved and much less is known in that situation.   

In \cite{NTTV} it is proved that, in any dimension $N\ge1,$ there are no solutions to \eqref{Sistema} with sublinear growth, i.e., such that 

\[
 \exists \quad \alpha \in (0,1) \quad : \quad
u(x)+v(x) \le C(1 + \vert x \vert)^{\alpha} \qquad \forall \,x \in \R^N.
\]

On the other hand, when $N=2$, solutions with arbitrary integer algebraic growth at infinity has been recently constructed in \cite{BTWW}.  In particular, these solutions are not one dimensional when the growth at infinity is superlinear, showing thus the great difference between the one dimensional case (where all the solutions have linear growth) and the higher dimensional case. 

Inspired by a celebrated conjecture of E. De Giorgi \cite{DeG} about monotone solutions to the Allen-Cahn equation (see also \cite{FV} for a recent review on the conjecture of De Giorgi and related topics) and motivated by the results in the one dimensional case, Berestycki, Lin, Wei and Zhao \cite{BLWZ} raised the following 

\medskip

\noindent {\textbf {Question (\cite{BLWZ})} {\em Let $ N>1$. Under what conditions is it true that all monotone solutions to \eqref{Sistema}, i.e., such that 

\beq \label{def:mono}
\frac{\partial u}{\partial x_N} >0, \quad  \frac{\partial v}{\partial x_N} <0 \quad in \,\, \R^N,
\eeq

\noindent are one dimensional ? (That is, there exist $ U,V : \R \rightarrow \R$ and a unit vector $\nu$ such that $(u(x),v(x)) = (U(\nu\cdot x), V(\nu\cdot x))$  for every $ x \in \R^N $ ? )}

\medskip

They gave a positive answer to the above question if

\beq \label{monoBLWZ}
N=2 \qquad {and} \qquad  
u(x) +v(x) \le C (1 + \vert x \vert) \qquad \forall \, x \in \R^2.
\eeq

\noindent Both the assumptions are crucial in their proof. In particular, in their approach it is not possible to replace the growth condition in \eqref{monoBLWZ} by the more general condition 

\[
u(x)+v(x) \le C(1 + \vert x \vert)^{1+ \epsilon} \qquad \forall \,x \in \R^2,
\]
for some $\epsilon>0$. 

\medskip

Our first result states that, for $N=2$, monotone solutions to \eqref{Sistema}  with at most {\em arbitrary algebraic growth} at infinity, must be one dimensional. 

\medskip 

\begin{theorem}\label{teo monot}
Let $N=2$. Then any {\em monotone} solution $(u,v)$ to \eqref{Sistema} with at most {\em arbitrary} algebraic growth at infinity, must be one dimensional. 
\end{theorem}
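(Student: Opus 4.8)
The plan is to reduce the system to a single scalar problem by introducing the difference $w := u - v$. Since $\Delta u = uv^2$ and $\Delta v = vu^2$, we get $\Delta w = uv^2 - vu^2 = -uv(u-v) = -uv\,w$, while the sum $\sigma := u+v$ satisfies $\Delta\sigma = uv\,\sigma$. The key observation — already implicit in the one-dimensional analysis of \cite{BLWZ} and \cite{BTWW} — is that monotonicity in $x_2$ of both components gives $\partial_{x_2} w = \partial_{x_2} u - \partial_{x_2} v > 0$ throughout $\R^2$, so $w$ is a monotone solution of the linear-looking equation $\Delta w + c(x) w = 0$ with $c(x) = uv(x) \ge 0$. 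I would then pass to the derivative $\psi := \partial_{x_2} w > 0$, which solves the linearized equation $\Delta \psi + c(x)\psi = (\partial_{x_2} c)\,w$; to get a clean equation I would instead work with $\psi_1 := \partial_{x_1} w$ and $\psi_2 := \partial_{x_2} w$, both of which solve $\Delta \psi_i + c(x)\psi_i + (\partial_{x_i} c)\,w = 0$ — and here use that $w$ itself is a solution of $\Delta w + c\,w = 0$, so the pair $(w, \psi_i)$ fits a linearized framework. The goal is to show $\psi_1 \equiv \lambda \psi_2$ for a constant $\lambda$, which forces $w$, and then $u,v$ separately, to depend on one variable only.

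The main technical engine should be a Liouville-type / Berestycki–Caffarelli–Nirenberg argument: if $\psi_2 > 0$ is a positive solution of a linear elliptic equation and $\psi_1$ is another solution, consider the ratio $\tau := \psi_1/\psi_2$, which satisfies the divergence-form equation $\operatorname{div}(\psi_2^2 \nabla \tau) = 0$. One then establishes a Caccioppoli-type estimate
\[
\int_{B_R} \psi_2^2 |\nabla \tau|^2 \,dx \le \frac{C}{R^2}\int_{B_{2R}\setminus B_R} \psi_1^2 \,dx
\]
and shows the right-hand side is controlled by a power of $R$ times something that does not grow, so that dividing by $\log R$ (the standard trick in dimension $2$) and letting $R\to\infty$ forces $\nabla\tau \equiv 0$. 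The arbitrary algebraic growth of $(u,v)$ is absorbed because in $N=2$ the logarithmic cutoff $\eta_R$ with $\int |\nabla \eta_R|^2 \le C/\log R$ beats any polynomial, provided one first obtains the a priori bound $|\nabla w| \le C(1+|x|)^m$ from the algebraic growth of $(u,v)$ together with interior elliptic (gradient) estimates applied on balls of radius comparable to $|x|$, using $0 \le c(x) = uv \le \sigma^2 \le C(1+|x|)^{2m}$. So the steps in order: (1) derive the equations for $w$, $\sigma$, $\psi_1$, $\psi_2$ and record $\psi_2 > 0$; (2) get polynomial gradient bounds on $w$ via interior estimates; (3) set $\tau = \psi_1/\psi_2$, derive the degenerate-elliptic equation it solves; (4) run the logarithmic-cutoff Caccioppoli argument valid in $\R^2$ for arbitrary polynomial weights to conclude $\nabla\tau\equiv 0$; (5) conclude $w$ is one-dimensional, then recover $u$ and $v$ individually by noting $\sigma$ solves its own equation and is a function of the same single variable, hence $u = (\sigma+w)/2$, $v = (\sigma - w)/2$ are one-dimensional.

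I expect step (4) — making the Liouville/Caccioppoli argument actually close with the correct weights — to be the main obstacle, for two related reasons. First, $\psi_2$ is only known to be positive, with no lower bound, so the weight $\psi_2^2$ in $\operatorname{div}(\psi_2^2\nabla\tau)=0$ can degenerate badly, and one must be careful that the Caccioppoli inequality's right-hand side $R^{-2}\int_{B_{2R}\setminus B_R}\psi_1^2$ is genuinely $o(\log R)$ — this needs a matching upper bound $\psi_1^2 \le \psi_2^2 \cdot(\text{something})$, equivalently a bound on $\tau$ itself, or else a more refined estimate exploiting that both $\psi_1$ and $\psi_2$ come from the \emph{same} solution $w$ (so $\psi_1\partial_{x_2}w = \psi_2\partial_{x_1}w$ gives pointwise control of $\tau$ by $|\nabla w|$ ratios). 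Second, one must verify that $\psi_2>0$ indeed holds globally and does not vanish — this uses the strong maximum principle on the linearized equation, but one should check the sign of the zeroth-order coefficient, which is why it is cleaner to work with the stability-type inequality that $w$ being a monotone solution yields, rather than with a bare linear equation. Once the weighted energy estimate is set up with the honest weights coming from $w$, the dimension-two logarithmic cutoff handles the arbitrary algebraic growth essentially for free, which is exactly the improvement over the linear-growth restriction \eqref{monoBLWZ} of \cite{BLWZ}.
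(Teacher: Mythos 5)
Your proposal does not follow the paper's route and, more importantly, it has two gaps that I believe are fatal. First, the ratio argument in steps (3)--(4) does not close. Writing $\Delta w + c\,w = 0$ with $c = uv$ and differentiating gives $\Delta\psi_i + c\,\psi_i + (\partial_{x_i}c)\,w = 0$, so $\psi_1$ and $\psi_2$ solve \emph{different} inhomogeneous equations (the forcing terms $(\partial_{x_1}c)w$ and $(\partial_{x_2}c)w$ do not match). Consequently
\[
\operatorname{div}(\psi_2^2\nabla\tau) \;=\; \psi_2\Delta\psi_1 - \psi_1\Delta\psi_2 \;=\; -\,w\,\bigl(\psi_2\,\partial_{x_1}c - \psi_1\,\partial_{x_2}c\bigr),
\]
which has no sign and does not vanish: the system's coupling prevents you from reducing to the scalar De Giorgi framework via $w=u-v$, because $c=uv$ is not a function of $w$ alone. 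Second, and independently, the claim that ``the dimension-two logarithmic cutoff beats any polynomial'' is not correct. The Berestycki--Caffarelli--Nirenberg-type Liouville theorem (and the Proposition~\ref{Liouville} in this paper) requires the quadratic-growth energy bound $\int_{B_R}(\psi_2\tau)^2 = \int_{B_R}\psi_1^2 \le CR^2$; if $w$ has algebraic growth of degree $m>1$ this integral grows like $R^{2m}$ and the argument collapses. This is precisely the obstruction the introduction points out: the BLWZ approach cannot even handle growth $(1+|x|)^{1+\epsilon}$, so any argument of that family cannot ``absorb'' arbitrary algebraic growth for free. (There is also a smaller gap at step (5): even if $w$ were one dimensional, $\sigma=u+v$ solves $\Delta\sigma = \tfrac14(\sigma^2-w^2)\sigma$, and it is not automatic that $\sigma$ depends on the same single variable.)

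The paper's actual strategy sidesteps the Liouville argument entirely at the stage where growth matters. It uses the monotonicity of the Almgren frequency function \eqref{Almgren} to show (Lemma~\ref{lemma:crescita alg}) that algebraic growth of exponent $\alpha$ forces ${\cal N}(\infty)\le\alpha<\infty$; then the blow-down theorem of \cite{BTWW} (Theorem~\ref{teo:sus}) produces a homogeneous harmonic polynomial $\Psi$ of degree $d={\cal N}(\infty)$ with $(\Psi^+,\Psi^-)$ inheriting the monotonicity, hence $\partial\Psi/\partial x_2\ge0$; a monotone entire harmonic function in the plane must be affine (Proposition~\ref{armoniche:monot}), so $d=1$ and the solution in fact has \emph{linear} growth. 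Only then is the linear-growth result of \cite{BLWZ} invoked. If you want to salvage your approach, the missing idea is exactly this frequency-function/blow-down step that upgrades arbitrary algebraic growth to linear growth before any energy or Liouville estimate is attempted.
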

Our proof uses a different strategy based on the {\it Almgren frequency function} 

\beq \label{Almgren}
{\cal N}(r) : = \frac{r \int_{B_r(0)} \vert \nabla u \vert^2 + \vert \nabla v \vert^2 + u^2v^2}{\int_{\partial B_r(0)} u^2 + v^2}, \qquad r>0.
\eeq
We shall describe it in Section 3. 

\medskip

To state our second result we need the following 

\medskip

\noindent {\textbf {Definition (Half-monotone solution)} {\em A solution $(u,v)$ to \eqref{Sistema} is said to be {\em half-monotone} if it  has {\em one monotone component} (that is,  if either 
$ \frac{\partial u}{\partial x_N} >0$ or $\frac{\partial v}{\partial x_N} <0$ in $\R^N$).}

\medskip

Our second result states that, for $N=2$, half-monotone solutions to \eqref{Sistema}  with at most {\em arbitrary algebraic growth} at infinity, must be one dimensional. 

\medskip 

\begin{theorem}\label{teo half-monot}
Let $N=2$. Then any {\em half-monotone} solution $(u,v)$ to \eqref{Sistema} with at most {\em arbitrary algebraic growth} at infinity, must be one dimensional. 
\end{theorem}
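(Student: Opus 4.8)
\textbf{Proof proposal for Theorem \ref{teo half-monot}.}

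My plan is to reduce the half-monotone case to the fully monotone case already settled in Theorem \ref{teo monot}. Suppose, without loss of generality, that the monotone component is $u$, so that $\frac{\partial u}{\partial x_2}>0$ in $\R^2$, while a priori nothing is known about the sign of $\frac{\partial v}{\partial x_2}$. The key point is that $w:=\frac{\partial v}{\partial x_2}$ satisfies the linearized equation $\Delta w = u^2 w + 2uv\,\frac{\partial u}{\partial x_2}$, and since $u,v>0$ and $\frac{\partial u}{\partial x_2}>0$, the ``source'' term $2uv\,\frac{\partial u}{\partial x_2}$ is strictly positive. Thus $\Delta w \ge u^2 w$, i.e. $w$ is a subsolution of a linear Schrödinger-type operator with nonnegative potential $u^2$. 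I would then try to show that $w\le 0$ everywhere — if this holds, then either $w\equiv 0$ on some slice (forcing, by the strong maximum principle applied to the equation for $w$, that the positive source vanishes, which is impossible), or $w<0$ strictly, and in the latter case $(u,v)$ is genuinely monotone and Theorem \ref{teo monot} applies verbatim.

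To prove $w\le 0$ I would exploit that $u$ itself is a \emph{positive} supersolution of the same operator: $\Delta u = uv^2 \le$ (nothing useful directly), so instead I would use $\frac{\partial u}{\partial x_2}>0$, which is a positive solution of $\Delta \big(\frac{\partial u}{\partial x_2}\big) = v^2 \frac{\partial u}{\partial x_2} + 2uv\,\frac{\partial v}{\partial x_2}$. Comparing the two linearized equations suggests looking at the ratio or at a suitable linear combination. A cleaner route: set $\sigma = \frac{\partial u}{\partial x_2} + \frac{\partial v}{\partial x_2} = \frac{\partial}{\partial x_2}(u+v)$ and $\delta = \frac{\partial u}{\partial x_2} - \frac{\partial v}{\partial x_2} = \frac{\partial}{\partial x_2}(u-v)$; compute their equations and hope one of them decouples into a form to which a Liouville-type theorem for entire subsolutions with algebraic growth (of the kind used for Theorem \ref{teo monot}, built on the Almgren frequency function) applies. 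Realistically, though, the most robust argument is probably the following stability/Liouville argument: the monotone component $\frac{\partial u}{\partial x_2}>0$ being a positive solution of the linearized operator $L\phi := \Delta\phi - v^2\phi$ forces, via the standard ground-state substitution, that the quadratic form $\int |\nabla \xi|^2 + v^2\xi^2$ associated with $-\Delta + v^2$ is ``controlled'', and combined with the analogous structure for the $u^2$ operator one deduces a Poincaré-type inequality that, together with the algebraic growth bound and the monotonicity formula for $\mathcal N(r)$ from Section 3, forces $\nabla(u/v)$ or an analogous quantity to vanish.

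The main obstacle I anticipate is exactly establishing the sign $w=\frac{\partial v}{\partial x_2}\le 0$ (equivalently, promoting ``one monotone component'' to ``two monotone components''): the linearized system is cooperative only after a sign change, and the positive source term in the equation for $w$ cuts the wrong way for a naive maximum principle on an unbounded domain, so one must bring in the growth hypothesis and an energy/frequency argument rather than a pointwise one. Once monotonicity of both components is in hand, the conclusion is immediate from Theorem \ref{teo monot}; so the entire difficulty is concentrated in that reduction step, and I would expect the proof to mirror, with the extra input of the linearized equation for $w$, the Almgren-frequency machinery developed in Section 3 for the monotone case.
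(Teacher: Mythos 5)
Your high-level plan --- reduce to Theorem \ref{teo monot} by promoting the single monotone component to full monotonicity, i.e. by proving $w:=\frac{\partial v}{\partial x_2}\le 0$ and then $<0$ via the strong maximum principle applied to $\Delta w = u^2w+2uv\,\frac{\partial u}{\partial x_2}>u^2w$ --- is exactly the paper's strategy, and you have correctly located the difficulty. But the proposal stops at the point where the actual work begins, and it misses the two ingredients that make the reduction go through. First, before any Liouville argument can be run, one must show that the half-monotone solution already has \emph{linear} growth. In the paper this is done by the blow-down of Theorem \ref{teo:sus}: half-monotonicity passes to the limit and gives that $\Psi^+$ (the positive part of the limiting homogeneous harmonic polynomial) is nondecreasing in $x_2$, and then Proposition \ref{polArm+ : monot} --- a rigidity statement for homogeneous harmonic polynomials in $\R^2$ whose \emph{positive part} is monotone, proved via homogeneity, the strong maximum principle and Hopf's lemma --- forces $d=1$. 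This is a genuinely new lemma compared to the fully monotone case (where one controls $\Psi$ itself, not just $\Psi^+$), and your proposal does not address how to obtain linear growth from monotonicity of only one component.

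Second, the sign $w\le 0$ is not obtained by any of the routes you sketch (sum/difference of derivatives, or a Poincar\'e-type stability inequality), but by the ground-state substitution with $v$ itself as the positive solution: setting $\sigma=\frac{w}{v}$ one computes $-\mathrm{div}(v^2\nabla\sigma)\le 0$, and the crucial $L^2$ bound $\int_{B_R}(v\sigma^+)^2=\int_{B_R}(w^+)^2\le\int_{B_R}|\nabla v|^2\le CR^2$ follows from a Caccioppoli estimate that uses the linear growth established in the first step. Proposition \ref{Liouville} then gives $\sigma^+\equiv\lambda\ge 0$, and $\lambda>0$ is excluded because it would force $\Delta w=u^2w$, contradicting the strict inequality $\Delta w>u^2w$. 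Your closing remark that a ``naive maximum principle cuts the wrong way'' and that one needs an energy argument is accurate as a diagnosis, but since neither the linear-growth step nor the weighted Liouville step is carried out, the proposal as written does not constitute a proof; it is an (essentially correct) outline of the problem rather than a solution.
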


\medskip

Theorem \ref{teo monot} is proved in Section 3, while Section 4 will be devoted to Theorem \ref{teo half-monot}.

\medskip

\section{Some auxiliary results}

\medskip

In this section we prove some preliminary results which will be used in the course of the main theorems. 

We first recall that the {\it Almgren frequency function}  defined by \eqref{Almgren} is {\it nondecreasing} in $r$ (cfr. Proposition 5.2  of \cite{BTWW}) and then we prove the following result. 

\begin{lemma} \label{lemma:crescita alg}
Assume $N \ge 1$ and let $(u,v)$ be a solution to \eqref{Sistema} with algebraic growth at infinity, i.e., satisfying 

\beq \label{crescita}
\exists \quad \alpha \ge 1 \quad : \quad u(x)+v(x) \le C(1 + \vert x \vert)^{\alpha} \qquad \forall \,x \in \R^N.
\eeq
Then 
\beq \label{Ninfinito}
{\cal N}(\infty) : = \lim_{r \to +\infty} {\cal N}(r) \le \alpha.
\eeq
\end{lemma}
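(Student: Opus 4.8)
The plan is to exploit the monotonicity of $\mathcal{N}(r)$ together with the well-known fact that the value of the Almgren frequency controls the growth of the associated quantity $H(r):=\int_{\partial B_r}u^2+v^2$. First I would record the standard identity for the derivative of the (logarithm of the) height function: writing $D(r):=\int_{B_r}|\nabla u|^2+|\nabla v|^2+u^2v^2$ for the numerator-type quantity, one has $H'(r)=\frac{N-1}{r}H(r)+2D(r)$ (this uses only the divergence theorem and the equations in \eqref{Sistema}, after multiplying the first equation by $u$ and the second by $v$ and integrating by parts on $B_r$, which produces exactly $D(r)$). Hence
\[
\frac{d}{dr}\log\frac{H(r)}{r^{N-1}}=\frac{2D(r)}{H(r)}=\frac{2\mathcal{N}(r)}{r}.
\]

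Next I would use that $\mathcal{N}$ is nondecreasing (Proposition 5.2 of \cite{BTWW}) and argue by contradiction: suppose $\mathcal{N}(\infty)=\beta>\alpha$. Then there is $r_0>0$ with $\mathcal{N}(r)\ge\beta':=\tfrac{\alpha+\beta}{2}>\alpha$ for all $r\ge r_0$. Integrating the displayed identity from $r_0$ to $r$ gives
\[
\log\frac{H(r)}{r^{N-1}}-\log\frac{H(r_0)}{r_0^{N-1}}=\int_{r_0}^{r}\frac{2\mathcal{N}(s)}{s}\,ds\ge 2\beta'\log\frac{r}{r_0},
\]
so $H(r)\ge c\, r^{N-1+2\beta'}$ for $r$ large, with $c>0$. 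On the other hand, the growth assumption \eqref{crescita} gives $u^2+v^2\le C(1+|x|)^{2\alpha}$, hence $H(r)=\int_{\partial B_r}u^2+v^2\le C' r^{N-1+2\alpha}$ for $r$ large. Comparing the two bounds yields $c\,r^{2\beta'}\le C' r^{2\alpha}$ for all large $r$, which is impossible since $\beta'>\alpha$. Therefore $\mathcal{N}(\infty)\le\alpha$, which is \eqref{Ninfinito}.

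I expect the only genuine point requiring care to be the justification that $\mathcal{N}(\infty)$ exists and the harmless technicalities around $H(r)>0$ (so that the logarithm and the quotient $D/H$ make sense): positivity of $u,v$ forces $H(r)>0$ for every $r>0$, and monotonicity of $\mathcal{N}$ gives existence of the limit in $(-\infty,+\infty]$, with $\mathcal{N}\ge 0$ since $D,H\ge0$. The contradiction argument above simultaneously shows the limit is finite. A small alternative, avoiding contradiction, is to integrate directly: $\log H(r)\le (N-1)\log r+2\mathcal{N}(\infty)\log r+\text{const}$ would follow if $\mathcal{N}(r)\le\mathcal{N}(\infty)$ for all $r$ (true by monotonicity), and then the upper bound $H(r)\le C'r^{N-1+2\alpha}$ forces $\mathcal{N}(\infty)\le\alpha$ after dividing by $\log r$ and letting $r\to\infty$; this is essentially the same computation packaged without the auxiliary $\beta'$. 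Either way, the mechanism is: frequency nondecreasing $\Rightarrow$ lower bound on growth of $H$ in terms of $\mathcal{N}(\infty)$, which must be compatible with the a priori algebraic upper bound.
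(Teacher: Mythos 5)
Your argument is correct and is essentially the paper's proof: the paper shows that $q(r)=r^{1-N}H(r)/r^{2\mathcal{N}(r_0)}$ is nondecreasing (using the same derivative identity for $H$ and the monotonicity of $\mathcal{N}$) and then compares the resulting lower bound $H(r)\ge c(r_0)\,r^{N-1+2\mathcal{N}(r_0)}$ with the upper bound coming from \eqref{crescita}, exactly your mechanism packaged without the contradiction. The one small slip is that multiplying the equations by $u,v$ and integrating by parts yields $D(r)+\int_{B_r}u^2v^2$, not exactly $D(r)$, so your displayed formula for $\frac{d}{dr}\log\bigl(H(r)/r^{N-1}\bigr)$ is really a ``$\ge$'' rather than an equality; since the extra term is nonnegative this only strengthens the lower bound on $H$, and the proof goes through unchanged.
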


\begin{proof}

For $ r>0$ we set $ H(r) : =  r^{1-N}\int_{\partial B_r(0)} u^2 + v^2$ and $q(r) = \frac{H(r)}{r^{2{\cal N}(r_0)}} $.  A direct computation gives that 

\beq \label{q-monot}
\forall \, r_0>0 \qquad r \rightarrow q(r) \qquad is \,\, nondecreasing \,\,  for \quad r>r_0.
\eeq
Indeed, a direct calculation yields $H'(r) = 2 r^{1-N} \int_{B_r(0)} \vert \nabla u \vert^2 + \vert \nabla v \vert^2 + 2u^2v^2$ (cfr. Section 5 of \cite{BTWW}) and so $ q'(r) = 2 r^{-2{\cal N}(r_0)} [ r^{-1}{\cal N}(r)H(r) - r^{-1} {\cal N}(r_0) H(r) + \int_{B_r(0)} u^2v^2 ] \ge 2 r^{-2{\cal N}(r_0)} [ r^{-1}{\cal N}(r)H(r) - r^{-1} {\cal N}(r_0) H(r) ] \ge 0$ for any $ r>r_0$, where in the latter we have used the monotonicity of the Almgren frequency function ${\cal N}$.  

From \eqref{q-monot} we infer that 

\beq \label{stima1:crescita}
\forall \, r_0>0 \quad \exists \, c(r_0)>0 \quad : \quad c(r_0) r^{2{\cal N}(r_0)} \le H(r) \qquad \forall \, r>r_0
\eeq
and thus, by \eqref{crescita}, 

\beq \label{stima2:crescita}
\forall \, r_0>0 \quad \exists \, c(r_0)>0 \quad : \quad c(r_0) r^{2{\cal N}(r_0)} \le c_1 r^{2\alpha} \qquad \forall \, r>r_0
\eeq
where $ c_1 $ is a positive constant depending only on the dimension $N$ and on the constant $C$ appearing in \eqref{crescita}.  From \eqref{stima2:crescita} we immediately get 

\beq \label{stima3:crescita}
\forall r_0>0 \qquad {\cal N}(r_0) \le \alpha
\eeq
and the desired conclusion \eqref{Ninfinito} follows from the monotonicity of the Almgren frequency function ${\cal N}$. 
\end{proof}

Now we prove a Liouville-type theorem which will be useful in Section 4.

\medskip

\begin{proposition}  \label{Liouville} 
Assume $N \ge 1$. Let $v,  \sigma \in C^2(\R^N)$  be functions satisfying $v>0$ on $\R^N$, 
\beq \label{equaLiouvi}
- div(v^2 \nabla \sigma) \le 0 \quad in \,\, \R^N
\eeq
and
\beq \label{hypoLiouv}
\int_{B_R(0)} ( v \sigma^+)^2 \le C R^2 \qquad \forall \, R>>1,
\eeq
for some positive constant $C$ independent of $R$.

\noindent Then $ \sigma^+ = const.$
\end{proposition}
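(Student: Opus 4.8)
The plan is to run a Caccioppoli-type argument with logarithmic cutoffs, exploiting the divergence structure of \eqref{equaLiouvi} and the quadratic growth bound \eqref{hypoLiouv}. First I would multiply \eqref{equaLiouvi} by $\sigma^+ \xi^2$, where $\xi$ is a cutoff function to be chosen, and integrate by parts. Since $\sigma^+ \geq 0$ and $-\mathrm{div}(v^2\nabla\sigma) \leq 0$, this yields
\[
\int_{\R^N} v^2 \nabla\sigma\cdot\nabla(\sigma^+\xi^2) \leq 0,
\]
and because $\nabla\sigma = \nabla\sigma^+$ on the set $\{\sigma>0\}$ (and $\sigma^+\xi^2$ is supported there, up to a null set), this gives
\[
\int_{\R^N} v^2 |\nabla\sigma^+|^2 \xi^2 \leq -2\int_{\R^N} v^2 \sigma^+ \xi\,\nabla\sigma^+\cdot\nabla\xi.
\]
A Cauchy--Schwarz / Young inequality on the right-hand side then absorbs half of the left-hand side, leaving the standard estimate
\[
\int_{\R^N} v^2 |\nabla\sigma^+|^2 \xi^2 \leq 4\int_{\R^N} v^2 (\sigma^+)^2 |\nabla\xi|^2.
\]

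The next step is to choose the cutoff cleverly. The naive choice $\xi = 1$ on $B_R$, linear to $0$ on $B_{2R}$, gives $|\nabla\xi|^2 \sim R^{-2}$ and, combined with \eqref{hypoLiouv}, only a bounded right-hand side — not enough to conclude $\nabla\sigma^+ \equiv 0$. The standard fix for a borderline (quadratic growth vs.\ dimension-free) situation is a logarithmic cutoff: for large $R$ take $\xi \equiv 1$ on $B_{\sqrt R}$, $\xi \equiv 0$ outside $B_R$, and $\xi$ interpolating so that $|\nabla\xi(x)| \lesssim \frac{1}{|x|\log R}$ on the annulus $B_R\setminus B_{\sqrt R}$. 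Then
\[
\int_{\R^N} v^2 (\sigma^+)^2 |\nabla\xi|^2 \lesssim \frac{1}{(\log R)^2}\int_{B_R\setminus B_{\sqrt R}} \frac{v^2(\sigma^+)^2}{|x|^2}.
\]
Slicing the dyadic annuli between $\sqrt R$ and $R$ and using \eqref{hypoLiouv} on each, $\int_{B_{2^{k+1}}\setminus B_{2^k}} v^2(\sigma^+)^2/|x|^2 \lesssim 2^{-2k}\cdot C 2^{2(k+1)} = 4C$, and summing over the $O(\log R)$ relevant scales gives a bound $\lesssim \frac{\log R}{(\log R)^2} = \frac{1}{\log R} \to 0$. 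Letting $R\to\infty$ forces $\int_{\R^N} v^2|\nabla\sigma^+|^2 = 0$, hence $\nabla\sigma^+ \equiv 0$ (as $v>0$), so $\sigma^+$ is constant.

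The main obstacle — and the only place requiring care — is the integration by parts in the first step: $\sigma^+$ is only Lipschitz (the product $v^2\nabla\sigma\cdot\nabla\sigma^+ = v^2|\nabla\sigma^+|^2$ a.e.\ needs the fact that $\nabla\sigma = 0$ a.e.\ on $\{\sigma = 0\}$, i.e.\ on $\partial\{\sigma>0\}$), and one must justify that \eqref{equaLiouvi}, which is stated in the classical $C^2$ sense, may be tested against the nonnegative Lipschitz compactly supported function $\sigma^+\xi^2$. This is routine (approximate $\sigma^+$ by $(\sigma-\eps)^+$ or by a smooth monotone truncation, integrate by parts, then pass to the limit using dominated convergence and $|\nabla(\sigma-\eps)^+| \leq |\nabla\sigma^+|$), but it is the step where the regularity bookkeeping lives. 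Everything after that is the classical logarithmic-cutoff Liouville computation, whose success hinges precisely on the exponent $2$ in \eqref{hypoLiouv} matching the quadratic weight $|x|^2$ produced by the scaling.
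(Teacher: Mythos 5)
Your proof is correct, and it diverges from the paper's in the second half. Both arguments begin identically: test \eqref{equaLiouvi} with $\sigma^+\phi^2$ and integrate by parts to reach a Caccioppoli-type inequality. The difference is in how the critical exponent $2$ in \eqref{hypoLiouv} is handled. You absorb the cross term completely via Young's inequality, which destroys the information that $\nabla\phi$ is supported in an annulus, and you then compensate with a logarithmic cutoff ($\phi\equiv 1$ on $B_{\sqrt R}$, $|\nabla\phi|\lesssim (|x|\log R)^{-1}$) plus a dyadic summation to make the right-hand side $O(1/\log R)\to 0$. The paper instead applies Cauchy--Schwarz while \emph{keeping} the restriction to the annulus $\{R\le|x|\le 2R\}$ on one factor, obtaining
\[
\int \phi^2_R v^2 |\nabla\sigma^+|^2 \le C_1 C^{1/2}\Bigl[\int_{\{R\le|x|\le 2R\}} \phi^2_R v^2|\nabla\sigma^+|^2\Bigr]^{1/2};
\]
this first forces $v^2|\nabla\sigma^+|^2\in L^1(\R^N)$, whence the annular factor tends to $0$ as $R\to\infty$ and the whole integral vanishes. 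Both devices are standard ways to win at the borderline growth rate; the paper's version needs only the plain linear cutoff and is a bit shorter, while yours is the more classical logarithmic-cutoff Liouville computation and arguably makes the role of the exponent $2$ more explicit. Your remark that the naive cutoff with full absorption only yields a bounded right-hand side is accurate --- that is precisely why the paper refrains from absorbing and keeps the annulus. The regularity bookkeeping you flag for testing against the Lipschitz function $\sigma^+\phi^2$ is handled correctly (the paper passes over it silently).
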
 

\begin{proof} Let $\phi \in C^{\infty}_c (\R^N)$ be a function such that $ 0 \le \phi \le 1$, and

\beq\label{cut-off}
\phi(x) := \begin{cases}
1\quad & {\rm if} \qquad \vert x \vert \le 1,\\
0 & {\rm if} \qquad \vert x \vert \ge 2.
\end{cases}
\eeq
For $ R>1$ and $x \in \R^N$, let $ \phi_R(x) = \phi({x \over {R}})$.
Multiplying \eqref{equaLiouvi} by $\sigma^+ \phi^2_R$ and integrating by parts, we find

\[
\int \phi^2_R v^2 \vert \nabla \sigma^+ \vert^2 \le - 2 \int \phi_R v^2 \sigma^+ (\nabla \phi_R  \cdot \nabla \sigma) \le
\]
\[
\le C_1 \Big [ \int_{\{R \le \vert x \vert \le 2R\}}  \phi^2_R v^2 \vert \nabla \sigma^+ \vert^2 \Big ]^{1 \over 2}
\Big [ {1 \over {R^2}} \int_{\{\vert x \vert \le R\}}  (v \sigma^+)^2  \Big ]^{1 \over 2}, 
\]
\noindent for some positive constant $C_1$ independent of $R$. Now, the assumption \eqref{hypoLiouv}  yields :

\beq \label{disugLiouv}
\int \phi^2_R v^2  \vert \nabla \sigma^+ \vert^2 \le C_1 C^{1 \over 2} \Big [ \int_{\{R \le \vert x \vert \le 2R\}}  \phi^2_R v^2 \vert \nabla \sigma^+ \vert^2 \Big ]^{1 \over 2}\qquad {\rm for} \quad R>>1,
\eeq
which implies $ v^2 \vert \nabla \sigma^+ \vert^2  \in L^1 ({\R^N}).$ Using the latter information in \eqref{disugLiouv} and letting $R \rightarrow +\infty$, we obtain $ v^2 \vert \nabla \sigma^+ \vert^2 \equiv 0$, which implies $ \sigma^+ = const.$ 
\end{proof}

We close the present section by recalling a result proved in \cite{BTWW} (cfr. Theorem 1.4. therein). 

\begin{theorem} [\cite{BTWW}]\label{teo:sus}
Assume $ N\ge2$. Let $(u,v)$ be a solution to \eqref{Sistema} such that ${\cal N}(\infty)$ is finite. Then
\beq \label{intero}
{\cal N}(\infty) = d \in \N^{\star}
\eeq
and
there is a homogeneous harmonic polynomial of degree $d$, denoted by $\Psi$, such that the blow-down sequence defined by :
\beq \label{}
(u_R(x), v_R(x)) : = \Big( \frac{1}{L(R)} u(Rx), \frac{1}{L(R)} v(Rx) \Big), \qquad R>0,
\eeq
\beq \label{}
where \qquad  L(R)>0 \quad  : \quad \int_{\partial B_1} u_R^2 + v_R^2 = 1,
\eeq
converges (up to a subsequence) to $(\Psi^+, \Psi^-)$ uniformly on compact sets of $\R^N.$ 
In addition, if ${\cal N}(\infty) =1$ then $(u,v)$ has linear growth at infinity. 
\end{theorem}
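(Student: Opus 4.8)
The plan is to run the classical blow-down analysis for strongly competing systems, driven by the Almgren monotonicity formula recalled at the start of Section 2. I would first record the effect of the scaling: writing $(u_R,v_R)=\big(L(R)^{-1}u(R\,\cdot),L(R)^{-1}v(R\,\cdot)\big)$, a direct computation shows that each blow-down solves the \emph{same} system but with an amplified coupling,
\[
\Delta u_R=\beta_R\,u_R v_R^2,\qquad \Delta v_R=\beta_R\,v_R u_R^2,\qquad \beta_R:=R^2 L(R)^2 ,
\]
and that $\beta_R\to+\infty$. The normalization $\int_{\partial B_1}u_R^2+v_R^2=1$ fixes the scale. The crucial structural point is that the Almgren frequency is scale covariant: the frequency of the rescaled pair at radius $\rho$, computed for the system with coupling $\beta_R$, equals $\mathcal N(R\rho)$. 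Since $\mathcal N$ is nondecreasing and $\mathcal N(\infty)<\infty$ by hypothesis, the frequencies of the blow-down sequence converge, as $R\to\infty$, to the constant function $\rho\mapsto\mathcal N(\infty)$.

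Second, I would establish the compactness needed to pass to the limit. The bound $\mathcal N(r)\le\mathcal N(\infty)$, together with the formula for $H'(r)$ and the monotonicity of $q(r)=H(r)r^{-2\mathcal N(r_0)}$ exploited in the proof of Lemma \ref{lemma:crescita alg}, yields two-sided growth control $c\,r^{2\mathcal N(\infty)}\le H(r)\le C\,r^{2\mathcal N(\infty)}$ and hence a doubling estimate $H(2r)\le C\,H(r)$ with $C$ independent of $r$. After rescaling, this gives uniform $L^2$ bounds for $(u_R,v_R)$ on every ball $B_\rho$. I would then invoke the uniform H\"older (indeed Lipschitz) estimates for strongly competing systems, which are uniform in the coupling $\beta_R$, to upgrade these to uniform $C^{0,\alpha}_{\mathrm{loc}}$ bounds. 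This permits extraction of a subsequence converging locally uniformly to a limit $(u_\infty,v_\infty)$, and the normalization $\int_{\partial B_1}u_\infty^2+v_\infty^2=1$ survives, so the limit is nontrivial.

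Third, I would identify the limit. Because $\beta_R\to+\infty$ while the products $u_Rv_R^2$ stay locally bounded, testing the equations forces $\int_{B_\rho}\beta_R u_R v_R^2\,dx$ to remain bounded and $\int_{B_\rho}u_Rv_R^2\to0$, whence the segregation $u_\infty v_\infty\equiv0$; each of $u_\infty,v_\infty$ is nonnegative and harmonic on its own positivity set. Setting $\Psi:=u_\infty-v_\infty$, so that $u_\infty=\Psi^+$ and $v_\infty=\Psi^-$, the key structural fact from segregation theory is that $\Psi$ is harmonic across all of $\R^N$ (the two one-sided normal contributions along the free boundary cancel). Finally, since the blow-down frequencies converge to the constant $\mathcal N(\infty)$, the limit profile has constant Almgren frequency equal to $d:=\mathcal N(\infty)$; by the rigidity case of the monotonicity formula, constant frequency forces homogeneity, so $\Psi$ is a homogeneous harmonic function of degree $d$, which—being globally defined and polynomially bounded—must be a homogeneous harmonic polynomial. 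Its degree is then necessarily a positive integer, giving $\mathcal N(\infty)=d\in\N^\star$. The last assertion follows because $d=1$ forces $\mathcal N(r)\le1$, hence $H(r)\le C r^{2}$, and since $u,v$ are subharmonic this $L^2$-on-spheres bound yields the pointwise estimate $u(x)+v(x)\le C(1+|x|)$.

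The main obstacle is the passage to the segregated limit. The two substantial analytic inputs are the coupling-uniform regularity estimates, so that $\beta_R\to+\infty$ does not destroy compactness, and the proof that the limiting difference $\Psi=u_\infty-v_\infty$ is genuinely harmonic across the free boundary rather than merely subharmonic on each side; both rely on the full strength of the theory of strongly competing systems. Once these are in hand, the integrality of $d$ and the identification of $\Psi$ as a homogeneous harmonic polynomial are consequences of the rigidity of the Almgren formula and classical Liouville-type reasoning for globally defined harmonic functions of controlled growth.
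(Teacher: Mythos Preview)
The paper does not prove this theorem at all: it is merely \emph{recalled} from \cite{BTWW} (see the sentence ``We close the present section by recalling a result proved in \cite{BTWW} (cfr.\ Theorem~1.4 therein)'') and used as a black box in Sections~3 and~4. Your proposal, by contrast, sketches the actual blow-down argument underlying that reference. The outline you give---scale covariance of $\mathcal N$, doubling via the monotonicity of $q(r)=H(r)r^{-2\mathcal N(r_0)}$, compactness through the coupling-uniform H\"older estimates of \cite{NTTV}, segregation in the limit, harmonicity of $\Psi=u_\infty-v_\infty$ across the free boundary, and homogeneity from the rigidity case of Almgren's formula---is indeed the strategy of \cite{BTWW}, and the identification $L(R)^2=H(R)$ (whence $\beta_R=R^2H(R)\to+\infty$) is correct.

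Two points deserve care if you intend this as more than a sketch. First, the harmonicity of $\Psi$ across the nodal set is not automatic from the segregation $u_\infty v_\infty\equiv0$ alone; it requires a domain-variation (or ``reflection'') identity for the limiting pair, which in \cite{BTWW} is obtained by passing to the limit in a Pohozaev-type identity for the approximating system. Second, the final linear-growth assertion needs the subharmonicity of $u$ and $v$ together with a mean-value argument to pass from $H(r)\le Cr^2$ to a pointwise bound; this is straightforward but should be stated. With those caveats your proposal is a faithful summary of the proof in \cite{BTWW}, which is strictly more than what the present paper provides.
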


\medskip

Here, and in the sequel, we denote by $w^+$ the positive part of the function $w$ and by $w^-$ the negative part of $w$.  

\medskip

\section{Monotone solutions}

\medskip

\begin{proof}[Proof of Theorem \ref{teo monot}]  By Lemma \ref{lemma:crescita alg} we get that ${\cal N}(\infty)$ is finite.  This enables us  to use Theorem \ref{teo:sus} with $N=2$.  The monotonicity assumption implies that $ \frac{\partial u_R}{\partial x_2} >0$ and $\frac{\partial v_R}{\partial x_2} <0 $ in $\R^2$, for every $R>0$. Therefore, for every $x=(x_1,x_2) \in \R^2$, every $ t>0$ and every $ R>0$ we have 
\beq \label{monot:discr}
u_R(x_1,x_2 + t) \ge u_R(x_1,x_2), \qquad  v_R(x_1,x_2 + t) \le v_R(x_1,x_2), 
\eeq

\noindent and an application of Theorem \ref{teo:sus} immediately yields that $\Psi^+$ is nondecreasing with respect to $x_2$, while $\Psi^-$ is nonincreasing with respect to $x_2$.  In particular we obtain that $\frac{\partial \Psi}{\partial {x_2}} \ge 0 $ in $\R^2$. 

To conclude the proof we invoke the subsequent Proposition \ref{armoniche:monot}, which tells us that $\Psi$ must be a linear function. Hence, $ d=1$ in \eqref{intero}, which means that $(u,v)$ has at most linear growth at infinity, that  is \eqref{monoBLWZ} is satisfied. The desired result then follows from \cite{BLWZ}, as discussed in the Introduction. \end{proof}

\smallskip

Now we turn to Proposition \ref{armoniche:monot}, which deals with entire monotone harmonic functions.

\begin{proposition} \label{armoniche:monot}
Assume $N\ge 2$ and let $H$ be a harmonic function on $\R^N$ such that 
\[
\frac{\partial H}{\partial x_N} \ge 0 \quad in \,\, \R^N.
\]
Then 
\beq \label{decompArmonica}
H(x) = \gamma x_N + h(x_1,...,x_{N-1}) \qquad \forall \, x \in \R^N,
\eeq
where $h$ is a harmonic function on $\R^{N-1}$ and $ \gamma \in \R$.

\noindent In particular, $H$ must be an affine function when $N=2$.

\end{proposition}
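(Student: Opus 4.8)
The plan is to differentiate the monotonicity hypothesis and exploit that partial derivatives of a harmonic function are again harmonic, combined with a Liouville-type argument for nonnegative harmonic functions.

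First I would set $G := \frac{\partial H}{\partial x_N}$. Since $H$ is harmonic and smooth, $G$ is harmonic on $\R^N$, and by hypothesis $G \ge 0$. A nonnegative harmonic function on all of $\R^N$ is constant by the classical Liouville theorem for positive harmonic functions; hence $G \equiv \gamma$ for some constant $\gamma \ge 0$. Integrating $\frac{\partial H}{\partial x_N} = \gamma$ in the $x_N$ variable gives $H(x) = \gamma x_N + h(x_1,\dots,x_{N-1})$, where $h(x_1,\dots,x_{N-1}) := H(x_1,\dots,x_{N-1},0)$ depends only on the first $N-1$ variables. Since $\Delta H = 0$ and the $x_N$-part contributes nothing to the Laplacian, we get $\Delta_{x'} h = 0$, i.e. $h$ is harmonic on $\R^{N-1}$. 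This is the decomposition \eqref{decompArmonica}.

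For the last sentence, take $N=2$. Then $h$ is a harmonic function of the single variable $x_1$, so $h'' = 0$ and $h(x_1) = a x_1 + b$ is affine; therefore $H(x_1,x_2) = \gamma x_2 + a x_1 + b$ is an affine function on $\R^2$. In the intended application $H = \Psi$ is moreover a homogeneous harmonic polynomial, so $b = 0$ and $\Psi$ is linear, forcing its degree to be $1$.

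The only real point requiring care is the invocation of the Liouville theorem for the nonnegative harmonic function $G$; everything else is elementary bookkeeping. One should note that $G$ is genuinely $C^\infty$ and harmonic (interior regularity plus differentiation of Laplace's equation), so the hypotheses of the classical Liouville statement are met. I would expect no further obstacle.
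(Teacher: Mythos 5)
Your proof is correct and follows essentially the same route as the paper: identify $\partial H/\partial x_N$ as a nonnegative entire harmonic function, apply the classical Liouville theorem to conclude it is a constant $\gamma$, and then check that $h := H - \gamma x_N$ is harmonic and independent of $x_N$, with the $N=2$ case reducing to a one-variable harmonic (hence affine) function. No gaps.
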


\begin{proof}[Proof of Proposition \ref{armoniche:monot}] 
By assumption we have that $\frac{\partial H}{\partial {x_N}}$ is a {\em nonnegative} entire harmonic function. So, it must be constant by the classical Liouville Theorem, say $\frac{\partial H}{\partial {x_N}} \equiv \gamma \in \R $.  
In particular, the function $h := H - \gamma x_N$ satisfies $\frac{\partial h}{\partial {x_N}} \equiv 0$ and thus, it must be an entire harmonic function depending only on the variables $x_1,...,x_{N-1}$. This gives \eqref{decompArmonica}. When $N=2$, $h$ must be affine (since in this case $h$ depends only on one variable). This concludes the proof. 
\end{proof}

\begin{corollary} \label{oss:armo-monot}
Assume $N\ge 3$ and let $H$ be a homogeneous harmonic polynomial of degree $d \ge 1$  such that 
\[
\frac{\partial H}{\partial x_N} \ge 0 \quad in \,\, \R^N.
\]
Then we have the following alternative :
\begin{enumerate}
\item[($i$)] either $H$ is a linear  function 
\item[($ii$)] or $H$ is a homogeneous harmonic polynomial of degree $d \ge 2$ in the first $N-1$ variables. 
\end{enumerate}
\end{corollary}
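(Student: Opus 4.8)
The plan is to reduce the corollary to Proposition \ref{armoniche:monot} by a direct application, then analyze the two structural possibilities that arise. By Proposition \ref{armoniche:monot}, the hypothesis $\frac{\partial H}{\partial x_N} \ge 0$ forces the decomposition $H(x) = \gamma x_N + h(x_1,\dots,x_{N-1})$ with $h$ harmonic on $\R^{N-1}$ and $\gamma \in \R$. The first step is to use the additional hypothesis that $H$ is a \emph{homogeneous} polynomial of degree $d \ge 1$: since $H$ is homogeneous of degree $d$ and $\gamma x_N$ is homogeneous of degree $1$, the function $h = H - \gamma x_N$ is a polynomial that is a difference of (at most) two homogeneous pieces. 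I would argue that $h$ must itself be homogeneous — either of degree $d$ (if $d \ge 2$, in which case the only way $H = \gamma x_N + h$ can be homogeneous of degree $d$ is $\gamma = 0$), or the decomposition collapses to the case $d = 1$.

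More precisely, the key step is a case split on $d$. If $d = 1$, then $H$ is linear and we are in alternative ($i$); nothing further is needed. If $d \ge 2$, I would observe that homogeneity of $H$ of degree $d$ together with $H = \gamma x_N + h(x_1,\dots,x_{N-1})$ forces $\gamma = 0$ (compare the degree-$1$ part: the left side has no degree-$1$ component when $d \ge 2$, so $\gamma x_N$ must vanish, hence $\gamma = 0$). Consequently $H = h(x_1,\dots,x_{N-1})$ is a harmonic polynomial in the first $N-1$ variables, and it is homogeneous of degree $d \ge 2$ because $H$ is. This is exactly alternative ($ii$). Here the hypothesis $N \ge 3$ is what makes alternative ($ii$) non-vacuous: $h$ genuinely depends on $N - 1 \ge 2$ variables, so it need not be affine (contrast with the $N = 2$ case of Proposition \ref{armoniche:monot}).

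The only mildly delicate point — and the step I would expect to require the most care in writing — is justifying that $\gamma x_N$ and $h$ are separately homogeneous, i.e. that one cannot have cancellation between a non-homogeneous $h$ and the degree-$1$ term producing a homogeneous $H$. This follows from uniqueness of the decomposition of a polynomial into homogeneous components: writing $h = \sum_{k} h_k$ with $h_k$ homogeneous of degree $k$ (and not involving $x_N$), the identity $H = \gamma x_N + \sum_k h_k$ equates homogeneous components degree by degree, so $h_k = 0$ for $k \ne d$ and $\gamma = 0$ once $d \ge 2$ (the degree-$1$ component of $H$ being zero), while for $d = 1$ one directly gets that $H$ is affine and, being homogeneous, linear. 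Everything else is immediate from Proposition \ref{armoniche:monot} and elementary properties of harmonic polynomials, so no lengthy computation is involved.
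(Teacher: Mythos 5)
Your proof is correct and follows the same route as the paper: apply Proposition \ref{armoniche:monot}, split on $d=1$ versus $d\ge 2$, and in the latter case conclude $\gamma=0$ from the homogeneity of $H$. The paper states this in two lines; your extra justification via uniqueness of the decomposition into homogeneous components is exactly the detail the paper leaves implicit.
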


\begin{proof}
If $d=1$, $H$ is cleary linear. If $d\ge2$, then $\gamma =0$ in \eqref{decompArmonica}, since $H$ is also a homogeneous function of degree $d$. This proves the corollary. 
\end{proof}

\smallskip

To conclude the section we prove a proposition which will be crucial in the proof of Theorem \ref{teo half-monot}.

\smallskip

\begin{proposition} \label{polArm+ : monot} 
Assume $N=2$ and let $P$ be a homogeneous harmonic polynomial of degree $d \ge 1$ such that $P^+$ is nondecreasing with respect to $x_2$. 

\noindent Then $P$ must be a linear function. 
\end{proposition}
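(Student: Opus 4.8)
The plan is to write $P$ in polar coordinates $(r,\theta)$ in $\R^2$. A homogeneous harmonic polynomial of degree $d\ge 1$ in two variables has the form $P(r,\theta)=r^d(a\cos d\theta+b\sin d\theta)$, and after a rotation of coordinates (which does not affect whether $P^+$ is nondecreasing in the \emph{original} $x_2$ direction, so I must be careful here) we may write $P=c\,r^d\cos(d(\theta-\theta_0))$ for some $c>0$ and phase $\theta_0$. The zero set of $P$ is a union of $2d$ rays emanating from the origin at equal angular spacing $\pi/d$, and $P$ alternates sign on the $2d$ open sectors they bound; hence $P^+$ is supported on $d$ of these sectors, each of opening angle $\pi/d$, and vanishes identically on the complementary $d$ sectors.

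The key step is to exploit monotonicity of $P^+$ in $x_2$ on a sector where $P^+\equiv 0$. Suppose $d\ge 2$. Then there is at least one closed sector $\overline S$ of opening $\pi/d<\pi$ on which $P^+\equiv 0$, bounded by two rays $\ell_1,\ell_2$ through the origin. I want to derive a contradiction from $\frac{\partial P^+}{\partial x_2}\ge 0$. Consider a point $x$ lying in the interior of a sector \emph{adjacent} to $\overline S$ where $P^+>0$: starting from $x$ and moving in the $+x_2$ direction, the vertical line either enters $S$ (where $P^+=0$) or leaves it, and in at least one configuration this forces $P^+$ to \emph{decrease} from a positive value to $0$ along an upward vertical segment, contradicting monotonicity — unless the sectors are positioned so that every upward vertical ray meets the positivity set in an up-set. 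The clean way to phrase this: monotonicity in $x_2$ means that for each fixed $x_1$, the set $\{x_2 : P^+(x_1,x_2)>0\}=\{x_2: P(x_1,x_2)>0\}$ is an up-set (an interval of the form $(t(x_1),+\infty)$, possibly empty or all of $\R$). Equivalently, $\{P\le 0\}$ is closed downward in each vertical line. But $\{P\le 0\}$ is a union of $d$ closed sectors through the origin whose opening angles sum to $\pi$; for $d\ge 2$ at least one such sector is a proper cone of opening $<\pi$ not containing the downward vertical ray in a half-plane fashion, and one checks that a cone $C$ through the origin has the property "$C$ meets every vertical line in a downward-closed set" if and only if $C$ is contained in a closed lower half-plane $\{x_2\le \lambda x_1 + \mu\}$ through the origin, i.e. $C$ is a half-plane or a wedge opening downward of total angle $\le\pi$ bounded below. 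Since $\{P\le 0\}$ is a union of $d\ge 2$ disjoint (except at $0$) such sectors it cannot itself be a single downward wedge, giving the contradiction; hence $d=1$ and $P$ is linear.

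I expect the main obstacle to be making the geometric "up-set / downward-closed cone" argument rigorous and handling the phase $\theta_0$ and the possible degenerate positions of the nodal rays (e.g. a nodal ray pointing straight up or down) without hand-waving. A safe alternative that sidesteps the casework: argue via Proposition \ref{armoniche:monot} applied to $P$ restricted to regions of positivity is not available since $P$ itself need not be monotone, so instead I would use that $P^+$ is subharmonic, nonnegative, nondecreasing in $x_2$, and homogeneous of degree $d$; then $\frac{\partial P^+}{\partial x_2}$ is a nonnegative subharmonic function, homogeneous of degree $d-1$, vanishing on the $d$ sectors where $P<0$, and one derives from a mean-value / maximum-principle estimate on a ball that a nonnegative subharmonic function vanishing on a set of positive density at every scale must vanish identically, forcing $P^+$ independent of $x_2$, hence (being a nonzero homogeneous polynomial's positive part) $d=1$. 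Either route reduces, in the end, to the two-dimensional fact that the nodal domains of $r^d\cos d\theta$ are too "thin" to be up-sets in a fixed direction once $d\ge 2$.
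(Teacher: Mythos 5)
Your main route is genuinely different from the paper's. The paper never writes $P$ in polar form: it first observes that the nodal set of a homogeneous polynomial consists of full lines through the origin, then uses the monotonicity of $P^+$ to conclude that $P\le 0$ on the open half-plane below a non-vertical nodal line, and finally applies the strong maximum principle and Hopf's Lemma at the origin to get $\vert \nabla P(0)\vert >0$, which forces $d=1$ by homogeneity (a separate short argument handles the case where the only nodal line is the vertical axis). Your argument replaces the maximum principle/Hopf step by the explicit classification $P=c\,r^d\cos(d(\theta-\theta_0))$ and a combinatorial statement about the $2d$ alternating sectors; this is more elementary and perfectly viable in dimension $2$, at the price of exactly the casework you anticipate. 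The paper's proof is shorter and does not need the explicit form of $P$.

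Two points in your write-up need attention. First, the criterion you ``check'' is stated for a single convex cone $C$, whereas what must meet every vertical line in a downward-closed set is the \emph{union} $\{P\le 0\}$ of $d$ sectors; the fact that no individual sector is a downward wedge does not by itself contradict downward-closedness of the union, since a downward vertical ray leaving one negative sector could a priori re-enter another one. (Also, for a single closed convex cone the correct criterion is that it contain the direction $(0,-1)$; containment in a closed lower half-plane through the origin is necessary but not sufficient.) The gap is closed by tracking the polar angle along an upward vertical ray from a point $p$ with $P(p)>0$: this angle tends monotonically to $\pi/2$, so if $d\ge 2$ one may choose $p$ in a positive sector whose angular range is separated from $\pi/2$ by the angular range of some negative sector; the ray must then cross that negative sector, where $P^+=0<P^+(p)$, contradicting monotonicity. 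Second, your ``safe alternative'' is not safe: $P^+$ fails to be $C^1$ across the nodal set, $\frac{\partial P^+}{\partial x_2}$ is not a nonnegative subharmonic function in any usable sense, and the lemma ``a nonnegative subharmonic function vanishing on a set of positive density at every scale vanishes identically'' is false --- $(x_1)^+$ is a counterexample. That paragraph should be dropped and the sector argument completed instead.
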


\begin{remark}  
{\em In the above proposition the homogeneous harmonic polynomial $P$ cannot be replaced by an arbitrary harmonic function as shown by $H(x,y) = e^y \sin(x) $  in $\R^2$. } 
\end{remark}

\begin{proof}[Proof of Proposition \ref{polArm+ : monot} ]

If $P$ vanishes at a point $z_0 = (x_0,y_0) \in \R^2 \setminus \{ 0 \}$, it must vanishes on the entire straight line passing through $z_0$ and the origin. To see this, we first observe that $P$ vanishes on the half-line 
$\{ t z_0 \, : \, t \ge 0 \}$ by homogeneity. On the other hand, the restriction of $P$ to the entire straight line passing through $z_0$ and the origin is a polynomial of one variable, which is identically zero on the half-line $\{ tz_0 \, : \, t\ge 0\}$. This clearly implies that $P$ must vanish identically on the entire straight line containing $\{ t z_0 \, : \, t \ge 0 \}$. 

Since $ d\ge 1$, the polynomial $P$ must vanish somewhere outside the origin. 

Suppose that $P(z_0) = 0$ for some $z_0 = (x_0,y_0)$ with $ x_0 \neq  0$ and denote by ${\cal S}_{z_0}$ the straight line passing through $z_0$ and the origin. By the monotonicity assumption on $P^+$ we get that $ P \le 0$ on the open half-plane lying below  ${\cal S}_{z_0}$. Indeed, if $ P(x,y) >0$, then $P(x,s)>0$ for every $ s\ge y$, since  $P^+$ is nondecreasing on $\R^2$. 

Now, since $P$ is harmonic and nonconstant, the strong maximum principle implies that $P < 0$ everywhere on  the open half-plane lying below  ${\cal S}_{z_0}.$ Thus, an application of Hopf's Lemma gives that $ \vert \nabla P \vert >0$ on the straight line ${\cal S}_{z_0}$. In particular $ \vert \nabla P (0)\vert >0$, which clearly implies $d=1$ (by the  homogeneity of $P$) and $ P(x,y) = \alpha x +\beta y $, with $\beta >0$. 

Next we suppose that $P(z_0) = 0$ for some $z_0 = (0,y_0)$ and $ y_0 \neq 0$. In this case $P$ vanishes on the straight line $ \{ (0,y) \, : \, y \in \R \}$ and the above argument  tell us that $P$ cannot vanish on $ \R^2 \setminus \{ (0,y) \, : \, y \in \R \}$. Hence, either $P>0$ or $P<0$ on the open half-plane  
$ \{ (x,y) \in \R^2  \, : \, y>0  \}$. Applying once again Hopf's Lemma we get $ d=1$ and then $ P(x,y) = \alpha x$. This concludes the proof. 
\end{proof}

\medskip

\section{Half-monotone solutions}

\medskip

\begin{proof}[Proof of Theorem \ref{teo half-monot}]  Without loss generality we can suppose that $ \frac{\partial u}{\partial x_2} >0$ in $\R^2$. If we prove that $ \frac{\partial v}{\partial x_2} < 0$ in $\R^2$ we are done, since in this case the desired conclusion will follow from Theorem \ref{teo monot}. To this end, we first prove that $(u,v)$ has at most linear growth at infinity. 

\smallskip

By Lemma \ref{lemma:crescita alg} we get that ${\cal N}(\infty)$ is finite and so we can use Theorem \ref{teo:sus} with $N=2$.  Since $(u,v)$ is half-monotone we see that $ \frac{\partial u_R}{\partial x_2} >0$  in $\R^2$, for every $R>0$. Therefore, for every $x=(x_1,x_2) \in \R^2$, every $ t>0$ and every $ R>0$ we have 
\beq \label{half-monot:discr}
u_R(x_1,x_2 + t) \ge u_R(x_1,x_2)
\eeq

\noindent and an application of Theorem \ref{teo:sus} immediately yields that $\Psi^+$ is nondecreasing with respect to $x_2$. The latter enables us to invoke Proposition \ref{polArm+ : monot} from which we infer that $\Psi$ is linear. Hence $d=1$ and $(u,v)$ has at most linear growth at infinity.  With this information in our hands we are ready to prove 
that $ \frac{\partial v}{\partial x_2} < 0$ in $\R^2$. The latter claim follows from the next result. This completes the proof of Theorem \ref{teo half-monot}.
\end{proof}

\begin{theorem} Let $(u,v)$ be a solution of

\beq 
\begin{cases}
\Delta u  = u v^2 & \text{in $\R^2$}\\
\Delta v  = v u^2 & \text{in $\R^2$}\\
u, v >0  & \text{in $\R^2$}
\end{cases}
\eeq
such that 
\beq \label{crescLin}
u(x) + v(x) \le C (1 + \vert x \vert) \qquad \forall \, x \in \R^2,
\eeq
\beq \label{unaderivata}
\frac{\partial u}{\partial x_2} >0 \quad in \,\, \R^2.
\eeq
Then 
\[
\frac{\partial v}{\partial x_2} <0 \quad in \,\, \R^2.
\]
\end{theorem}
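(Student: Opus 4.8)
Write $w:=\partial_{x_2}u$ and $\sigma:=\partial_{x_2}v$; by \eqref{unaderivata} we have $w>0$ on $\R^2$, and the goal is to show $\sigma<0$ on $\R^2$. Solutions of the system are smooth by elliptic regularity, so $w,\sigma$ are smooth, and differentiating the second equation in the $x_2$-direction gives
\[
\Delta\sigma = u^2\sigma + 2uv\,w \qquad \text{in }\R^2 .
\]
The first observation is that it suffices to prove the \emph{non-strict} inequality $\sigma\le 0$: if $\sigma\le 0$ and $\sigma(x_0)=0$ for some $x_0$, then $x_0$ is an interior maximum of $\sigma$, so $\Delta\sigma(x_0)\le 0$, whereas the displayed identity gives $\Delta\sigma(x_0)=2u(x_0)v(x_0)w(x_0)>0$, a contradiction; hence $\sigma<0$ everywhere.

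To prove $\sigma\le 0$, the plan is to apply the Liouville-type Proposition \ref{Liouville}. Since $v>0$ and $\Delta v-u^2v=0$, I would set $\tau:=\sigma/v\in C^\infty(\R^2)$ and use the ground-state substitution
\[
\operatorname{div}(v^2\nabla\tau) = v\,\Delta\sigma-\sigma\,\Delta v = v\,(u^2\sigma+2uvw)-\sigma\,(u^2v) = 2uv^2w \ \ge\ 0 ,
\]
so that $(v,\tau)$ satisfies $-\operatorname{div}(v^2\nabla\tau)\le 0$, i.e.\ hypothesis \eqref{equaLiouvi}. For the integrability hypothesis \eqref{hypoLiouv} I need $\int_{B_R}(v\tau^+)^2\le CR^2$; since $v>0$ one has $v\tau^+=\sigma^+$ and $(\sigma^+)^2\le\sigma^2\le|\nabla v|^2$, so it is enough to show the Caccioppoli-type bound $\int_{B_R}|\nabla v|^2\le CR^2$. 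This follows by testing $\Delta v=u^2v$ against $v\phi_R^2$ (with $\phi_R$ the rescaled cut-off from \eqref{cut-off}), integrating by parts, applying Young's inequality, and using the linear growth \eqref{crescLin}, which gives $v^2\le CR^2$ on $B_{2R}$ and $|\nabla\phi_R|^2\le CR^{-2}$ on $B_{2R}\setminus B_R$, a set of area $\asymp R^2$.

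Proposition \ref{Liouville} then yields $\tau^+\equiv c$ for some constant $c\ge 0$. The last step is to rule out $c>0$: if $c>0$, then $\tau>0$ everywhere (a point with $\tau\le 0$ would force $\tau^+=0\ne c$ there), hence $\tau\equiv c$, i.e.\ $\partial_{x_2}v=\sigma=cv$ on $\R^2$; integrating in $x_2$ gives $v(x_1,x_2)=v(x_1,0)\,e^{cx_2}$, which grows exponentially in $x_2$ and contradicts \eqref{crescLin}. Therefore $c=0$, so $\sigma^+=v\tau^+\equiv 0$, i.e.\ $\sigma\le 0$, and by the first step $\sigma<0$ on $\R^2$.

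The key idea — and the place I expect the argument to hinge — is the substitution $\tau=\sigma/v$, which exploits that $v$ itself lies in the kernel of the linearized operator $\Delta-u^2$; everything else is routine, the only real bookkeeping being the energy estimate $\int_{B_R}|\nabla v|^2\le CR^2$ needed to meet the hypotheses of Proposition \ref{Liouville} and the elementary exclusion of the case $c>0$ via the growth bound. (Note that the linear growth \eqref{crescLin} is used twice, and essentially: once for the quadratic energy bound, once to kill the exponential alternative.)
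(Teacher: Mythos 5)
Your proof is correct and follows essentially the same route as the paper: the ground-state substitution $\tau=\sigma/v$ giving $-\operatorname{div}(v^2\nabla\tau)\le 0$, the Caccioppoli bound $\int_{B_R}|\nabla v|^2\le CR^2$ from testing $\Delta v=u^2v$ against $v\phi_R^2$, the application of Proposition \ref{Liouville}, and the final upgrade from $\sigma\le 0$ to $\sigma<0$. The only (harmless) divergence is in excluding the constant $c>0$: you integrate $\partial_{x_2}v=cv$ and contradict the linear growth, while the paper instead observes that $v_2=cv$ would force $\Delta v_2=u^2v_2$, contradicting the strict inequality $\Delta v_2>u^2v_2$ supplied by the term $2uv\,u_2>0$ (so the paper uses \eqref{crescLin} only once).
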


\begin{proof} Set $ u_2 = \frac{\partial u}{\partial x_2}$ and $v_2 = \frac{\partial v}{\partial x_2} $, then differentiating the second equation in \eqref{Sistema} we get 
\beq \label{primav2} 
\Delta v_2 = v_2 u^2 + 2vuu_2 > v_2u^2 \quad in \,\, \R^2.
\eeq
Here we have used \eqref{unaderivata} and $u, v >0$ in $\R^2$. 

On the other hand, if we set $ \sigma = \frac{v_2}{v}$ a direct calculation gives 
\[
\Delta v_2 = \frac{div(v^2 \nabla \sigma)}{v} +  v_2 u^2\quad in \,\, \R^2.
\]
Thus,  
\beq \label{equaLiouv}
- div(v^2 \nabla \sigma) \le 0 \quad in \,\, \R^2.
\eeq

Testing the second equation in \eqref{Sistema} with $v \phi_r^2$, where $\phi_r$ is the standard cut-off function defined in the proof of Proposition \ref{Liouville}, we have :
\beq \label{stima}
\int_{B_r} \vert \nabla v \vert^2 \le C r^2 \qquad \forall \, r>0 
\eeq
where $C$ is a positive constant independent of $r$. Note that in the latter estimate we have used in a crucial way the linear growth of $v$, i.e., the assumption \eqref{crescLin}. 

We observe that 

\[
0 \le v \sigma^+ = v \Big( \frac{v_2}{v} \Big)^+ = v_2^+ \le \vert \nabla v \vert \qquad {\rm on} \quad \R^2
\]
together with \eqref{equaLiouv} and \eqref{stima}, enables us to apply Proposition \ref{Liouville} to infer that 
$ \sigma^+ = const. = \lambda \ge 0$.  We claim that $ \lambda =0$. Indeed, $\lambda >0$ implies 

\beq \label{ultima}
v_2 = \lambda v >0 \qquad {\rm and} \qquad \Delta v_2 = \lambda \Delta v = \lambda v u^2 = v_2 u^2,
\eeq
which is in contradiction with \eqref{primav2}. Hence, $ \sigma^+ \equiv \lambda = 0$ and so $ v_2 \le 0 $ on $\R^2$. The strong maximum principle applied to \eqref{primav2} then gives $ v_2 < 0$ on $\R^2$. 
\end{proof}

\vspace{0.5cm}

\noindent \textbf{Acknowledgements: } The results contained in this article were presented at the Workshop {\it  Singular limit problems in nonlinear PDEs} on November 2012 at CIRM Luminy (France). The author wishes to thank the organizers for the invitation and their kind hospitality. The author is supported by the ERC grant EPSILON ({\it Elliptic Pde's and Symmetry of Interfaces and Layers for Odd Nonlinearities}).

\vspace{0.5cm}


\begin{thebibliography}{99}

\bibitem{BLWZ}
\newblock H. Berestycki, T-C. Lin, J. Wei, C. Zhao, 
\newblock \emph {On Phase-Separation Model: Asymptotics and Qualitative
Properties}, 
\newblock preprint 2009. To appear in Archive for Rational Mechanics and Analysis. 


\bibitem{BTWW}
\newblock H. Berestycki, S. Terracini, K.  Wang,  J. Wei,
\newblock \emph {On Entire Solutions of an Elliptic System Modeling Phase Separation},
\newblock preprint 2012,  arXiv:1204.1038 


\bibitem {DeG}
\newblock E. De Giorgi,
\newblock \emph {Convergence problems for functionals and operators}. 
\newblock Proceedings of the International Meeting on Recent Methods in Nonlinear Analysis (Rome, 1978). Pitagora, Bologna, 1979, pp. 131-188.



\bibitem{FV}
\newblock A. Farina, E. Valdinoci,
\newblock \emph {The state of the art for a conjecture of De Giorgi and related problems},
\newblock Recent Progress on Reaction-Diffusion Systems and Viscosity Solutions. World
Scientific Publishers, Hackensack, NJ, 2009, pp. 74-96.




\bibitem{NTTV}
\newblock B. Noris, H. Tavares, S. Terracini, G. Verzini, 
\newblock \emph {Uniform Holder bounds for nonlinear Schrodinger
systems with strong competition}, 
\newblock Comm. Pure Appl. Math. 63 (2010), 267-302. 


\end{thebibliography}
\end{document}